\date{}
\newcommand{\bbC}{{\mathbbm C}}
\newcommand{\bbL}{{\mathbbm{L}}}
\newcommand{\bbN}{{\mathbbm N}}
\newcommand{\bbR}{{\mathbbm R}}
\newcommand{\bbT}{{\mathbbm T}}
\newcommand{\bbZ}{{\mathbbm Z}}
\newcommand{\scrF}{{\mathscr{F}}}
\newcommand{\scrH}{{\mathscr{H}}}
\newcommand{\boldtheta}{{\bm{\theta}}}
\newtheorem{theorem}{Theorem}[section]
\newtheorem{lemma}[theorem]{Lemma}
\newtheorem{prop}[theorem]{Proposition}
\newtheorem{coro}[theorem]{Corollary}
\theoremstyle{definition}
\theoremstyle{definition}
\theoremstyle{definition}
\theoremstyle{definition}
\theoremstyle{definition}
\numberwithin{equation}{section}
\newcommand{\set}[1]{\left\{#1\right\}}
\begin{document}

\title[Ballistic Transport for Periodic Operators]{Ballistic Transport for  Periodic \\ Jacobi Operators on $\bbZ^d$}

\author[J.\ Fillman]{Jake Fillman}

\thanks{J.\ F.\ was supported in part by Simons Foundation Collaboration Grant \#711633. }

\maketitle

\begin{abstract}
In this expository work, we collect some background results and give a short proof of the following theorem: periodic Jacobi matrices on $\bbZ^d$ exhibit strong ballistic motion.
\end{abstract}

%\tableofcontents

\section{Introduction}

This expository note is concerned with the properties of Jacobi operators on $\bbZ^d$. Concretely, we fix the \emph{dimension} $d \in \bbN$ and consider linear operators $J=J_{a,b}:\ell^2(\bbZ^d) \to\ell^2(\bbZ^d)$ given by
\[[Ju]_{\bm{x}} = \sum_{\substack{\bm{y} \in \bbZ^d \\ \|\bm{x}-\bm{y}\|_1=1}} a_{\bm{x},\bm{y}}u_{\bm{y}} + b_{\bm{x}} u_{\bm{x}},
\quad u \in \ell^2(\bbZ^d), \ \bm{x} \in \bbZ^d,\]
where $a$ and $b$ are bounded, $b$ is real-valued, and $a_{\bm{y},\bm{x}} = a_{\bm{x},\bm{y}}^*\neq 0$ for all $\bm{x}$ and $\bm{y}$ in $\bbZ^d$ for which $\|\bm{x}-\bm{y}\|_1=1$.
For convenience, we will write $\bm{x} \sim \bm{y}$ for $\bm{x}, \bm{y} \in \bbZ^d$ to mean $\|\bm{x}-\bm{y}\|_1=1$.

In the case $d = 1$, we simply write $a_x := a_{x,x+1}$ and one obtains the familiar \emph{Jacobi matrix} on $\ell^2(\bbZ)$:
\[J = \begin{bmatrix}
\ddots & \ddots & \ddots \\
& a_0^* & b_1 & a_1 \\
&& a_1^* & b_2 & a_2 \\
&&&\ddots & \ddots & \ddots
\end{bmatrix}.\]
Jacobi matrices have inspired intense study over the years owing to their close connections with orthogonal polynomials, integrable systems, and mathematical physics; see, e.g., \cite{Dam2017ETDS, MarxJito2017ETDS, Simon2011:Szego, Teschl2000:Jacobi} and references therein.

We will be interested in the case in which $J$ is \emph{periodic}, i.e., there exists a full-rank subgroup $\bbL \subseteq \bbZ^d$ such that  $U^{\bm\ell}JU^{-\bm\ell}=J$ for all $\bm\ell \in \bbL$, where $U^{\bm\ell}$ denotes the shift $\delta_{\bm x} \mapsto \delta_{\bm x + \bm \ell}$. 
Equivalently,
\begin{equation} \label{eq:perVdef} 
a_{\bm{x}+ \bm{\ell}, \bm{y}+\bm{\ell}} = a_{\bm{x}, \bm{y}} \text{ and } b_{\bm{x}+ \bm{\ell}} = b_{\bm{x}} \text{ for all }\bm x \sim \bm{y} \in \bbZ^d \text{ and all }\bm{\ell} \in \bbL.\end{equation}
Of course, if $\{\bm{\ell}_1,\ldots,\bm{\ell}_d\}$ are linearly independent elements of $\bbZ^d$ generating a lattice $\bbL$ for which \eqref{eq:perVdef} holds, then it is a straightforward calculation to show that \eqref{eq:perVdef}  also holds for the lattice $\bbL' = r \bbZ^d = \{r \bm{n} : \bm n \in \bbZ^d\}$, where $r = |\det(\bm{\ell}_1 | \cdots | \bm{\ell}_d)|$ is the volume of $\bbR^d/\bbL$; consequently, no real generality is lost in considering lattices generated by multiples of the standard basis vectors, so we shall consider precisely this scenario in the present note. To that end, given $\bm{q} \in \bbN^d$, say $J$ is $\bm{q}$-periodic if $U^{\bm{mq}}JU^{-\bm{mq}} = J$ for all $\bm{m} \in \bbZ^d$, where 
\[ \bm{mq} = (m_1q_1, m_2q_2,\ldots,m_dq_d). \]
Equivalently, $J$ is $\bm{q}$ periodic if \eqref{eq:perVdef}  holds for the lattice $\bbL =  \bigoplus_{j=1}^d q_j \bbZ$.

The goal of the note is to discuss quantum dynamics associated with such periodic operators. In particular, we focus on the growth of the position observables. For a linear operator $O$, we denote by $O(t) = e^{itJ}O e^{-itJ}$ the corresponding time evolution with respect to $J$. For $1 \le j \le d$, the $j$th position operator  $X_j$ is given by $X_j\delta_{\bm{x}} = x_j\delta_{\bm{x}}$, where
\[D(X_j) = \set{\psi \in \ell^2(\bbZ^d) : X_j \psi \in \ell^2(\bbZ^d)}.\] The \emph{vector} position operator $\bm{X} :D(\bm{X}) = \bigcap_{j=1}^d D(X_j) \to \ell^2(\bbZ^d) \otimes \bbC^d$ given by
\[\bm{X}\psi = (X_1\psi,\ldots,X_d\psi).\]

The primary phenomenon that we will discuss is that of \emph{ballistic motion}, i.e., linear growth of the position observable(s). This was established for continuum Schr\"odinger operators by Asch--Knauf \cite{AscKna1998Nonlin} and was later extended to Jacobi matrices in $d=1$ by Damanik--Lukic--Yessen \cite{DamLukYes2015CMP}. The result we want to discuss is the generalization of \cite{DamLukYes2015CMP} to the case of general $d \geq 1$.

\begin{theorem} \label{t:ball}
If $J$ is periodic, then it exhibits ballistic motion in the following sense. There are bounded, self-adjoint operators $Q_k$, $1\le k \le d$, such that
\[\lim_{t \to \infty} \frac{X_k(t)}{t}  = Q_k\]
in the strong sense, and $\ker(Q_k) = \{0\}$. In particular,
\[\bm{Q} := \lim_{t\to\infty} \frac{\bm{X}(t)}{t}\]
in the strong sense and $\ker(\bm{Q}) = \{0\}$.
\end{theorem}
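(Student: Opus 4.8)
The plan is to pass to the Floquet--Bloch representation, reduce the Heisenberg evolution of $X_k$ to a Ces\`aro time-average of the (bounded) velocity operator, evaluate that average fiberwise by an elementary dephasing argument, and identify the limit with the group-velocity operator, whose almost-everywhere invertibility is exactly the statement $\ker(Q_k) = \{0\}$.

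First I would set up the Floquet transform. Writing $N = q_1\cdots q_d$ for the number of sites in a fundamental cell of $\bbL = \bigoplus_{j} q_j\bbZ$, the $\bm q$-periodicity of $J$ furnishes a unitary $\mathcal U\colon \ell^2(\bbZ^d)\to\int_{\bbT^d}^\oplus \bbC^N\,\dd\boldtheta$ under which $J$ becomes multiplication by a Hermitian matrix $\hat J(\boldtheta)$ whose entries are trigonometric polynomials in $\boldtheta$; in particular $\boldtheta\mapsto\hat J(\boldtheta)$ is real-analytic. A direct computation gives $[J,X_k]\delta_{\bm x} = \sum_{\bm y\sim\bm x} a_{\bm y,\bm x}(x_k - y_k)\delta_{\bm y}$, which connects $\bm x$ only to $\bm x\pm\bm e_k$; hence the velocity operator $v_k := i[J,X_k]$ is bounded. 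Moreover $v_k$ commutes with every $\bbL$-translation, since $U^{\bm\ell}X_kU^{-\bm\ell}$ differs from $X_k$ by the scalar $\ell_k$, which drops out of the commutator with $J$. Thus $v_k$ is again $\bm q$-periodic, and $\mathcal U v_k\mathcal U^{-1}$ acts fiberwise as multiplication by $\hat v_k(\boldtheta) = \partial_{\theta_k}\hat J(\boldtheta)$.

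Second, I would reduce the problem to this velocity. Integrating the Heisenberg equation $\tfrac{\dd}{\dd s}X_k(s) = v_k(s)$ gives, for finitely supported $\psi$ (a dense subspace on which all paths stay in $D(X_k)$),
\[ \frac{X_k(t)\psi}{t} = \frac{X_k\psi}{t} + \frac1t\int_0^t v_k(s)\psi\,\dd s. \]
The first term tends to $0$, so it suffices to show that the time-averages of the \emph{bounded} operator $v_k(s) = e^{isJ}v_ke^{-isJ}$ converge strongly to a bounded operator $Q_k$; since $\norm{v_k(s)} = \norm{v_k}$, these averages are uniformly bounded and strong convergence on the dense set propagates to all of $\ell^2(\bbZ^d)$. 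Fiberwise, for a Hermitian matrix $H$ with eigenprojections $P_E$ one has the dephasing limit $\frac1t\int_0^t e^{isH}Ae^{-isH}\,\dd s\to\sum_E P_EAP_E$, so dominated convergence in the direct integral identifies
\[ Q_k = \int_{\bbT^d}^\oplus \hat Q_k(\boldtheta)\,\dd\boldtheta, \qquad \hat Q_k(\boldtheta) := \sum_E P_E(\boldtheta)\,\partial_{\theta_k}\hat J(\boldtheta)\,P_E(\boldtheta), \]
the sum running over the distinct eigenvalues $E = E(\boldtheta)$ of $\hat J(\boldtheta)$.

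Finally --- and this is the crux --- I would show $\hat Q_k(\boldtheta)$ is invertible for a.e.\ $\boldtheta$, which gives $\ker(Q_k) = \{0\}$ since then $Q_k\psi = 0$ forces $\hat\psi(\boldtheta) = 0$ a.e. Differentiating $\hat J(\boldtheta)P_E(\boldtheta) = E(\boldtheta)P_E(\boldtheta)$ and compressing by $P_E$ yields the Feynman--Hellmann identity $P_E\,\partial_{\theta_k}\hat J\,P_E = (\partial_{\theta_k}E)\,P_E$, so $\hat Q_k(\boldtheta) = \sum_E (\partial_{\theta_k}E(\boldtheta))\,P_E(\boldtheta)$ is invertible precisely when no band has vanishing $\theta_k$-group-velocity at $\boldtheta$. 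The bands are roots of the analytic characteristic polynomial $p(\boldtheta,E) = \det(\hat J(\boldtheta)-E)$, and after eliminating $E$ the bad set $\set{\boldtheta : \partial_{\theta_k}E(\boldtheta)=0 \text{ for some band}}$ is the zero set of a real-analytic function of $\boldtheta$; it is therefore null \emph{unless} some band is flat in $\theta_k$. Ruling out such flat bands is the main obstacle, and I would do it by a partial Floquet transform in the remaining $d-1$ directions, which turns $J$ into a direct integral (over $\boldtheta_{\ne k}$) of one-dimensional period-$q_k$ block-Jacobi operators whose off-diagonal hopping blocks are diagonal with the nonzero entries $a_{\bm x,\bm y}$, hence invertible. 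For such operators the discriminant depends genuinely on $\theta_k$ --- concretely, $p$ contains a term $2\Re\big(c(\boldtheta_{\ne k})\,e^{i\theta_k}\big)$ whose coefficient $c$ is a nonzero product of the hoppings across the cell --- so no band is flat, exactly as in the one-dimensional theory of \cite{DamLukYes2015CMP}. This establishes $\ker(Q_k) = \{0\}$ for each $k$, and the vector statement follows at once, since $\ker(\bm Q) = \bigcap_k\ker(Q_k) = \{0\}$ and $\bm X(t)/t$ converges strongly by assembling its components.
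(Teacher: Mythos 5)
Your proposal is correct in outline and follows the same skeleton as the paper's proof: Floquet decomposition, integration of the Heisenberg equation to reduce everything to Ces\`aro averages of the bounded velocity $P_k = i[J,X_k]$, fiberwise dephasing to $\sum_j \Pi_j(\boldtheta)P_k(\boldtheta)\Pi_j(\boldtheta)$, the Feynman--Hellmann identification of this compression with $\sum_j \partial_{\theta_k}E_j(\boldtheta)\,\Pi_j(\boldtheta)$ (the paper does this by conjugating with an explicit gauge $M(\boldtheta)$, producing a harmless factor $q_k/2\pi$ that your normalization absorbs), and finally almost-everywhere nonvanishing of the group velocities. The genuine difference lies in that last, crucial step, the analogue of Lemma~\ref{lem:eigenvalues}. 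The paper restricts to lines in the $\theta_k$-direction, invokes one-dimensional analytic perturbation theory \cite{Kato1980:PertTh}, and then simply \emph{asserts} that the analytic eigenvalue branches are non-constant, leaving the details to the reader; you instead supply the mechanism: a partial Floquet transform in the transverse directions exhibits $J$ as a direct integral of one-dimensional period-$q_k$ block Jacobi operators whose hopping blocks are diagonal with nonzero entries, hence invertible, and this forces genuine $\theta_k$-dependence of the characteristic polynomial. This fills in precisely the point the paper leaves implicit, and is the most valuable part of your write-up.

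Two caveats, both fixable. First, your claim that $p$ contains a term $2\Re\big(c(\boldtheta_{\ne k})e^{i\theta_k}\big)$ with $c$ a nonzero product of hoppings is literally correct only in the scalar case $N_k := \prod_{j\ne k} q_j = 1$; for block corner entries the first Fourier coefficient is a sum over transverse sites which could in principle cancel. The statement you need (and which is true) concerns the top harmonic: the coefficient of $e^{iN_k\theta_k}$ in $\det\big(\hat J(\boldtheta)-E\big)$ is, up to sign, the product of the determinants of the hopping blocks across the cell, which is nonzero and independent of $E$; hence $\theta_k\mapsto p(\theta_k,E_0)$ is a nontrivial trigonometric polynomial for every fixed $E_0$, and no band can be flat. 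Second, your elimination step (bad set contained in the zero set of a real-analytic resultant, which is null unless some band is flat) has a gap: whenever $\hat J(\boldtheta)$ has a degenerate eigenvalue $E_0$, one automatically has $\partial_{\theta_k}p(\boldtheta,E_0)=0$ as well, so $\mathrm{Res}_E(p,\partial_{\theta_k}p)$ vanishes at every degeneracy; if two bands coincide identically --- which flat-band considerations do not exclude --- the resultant vanishes identically and the argument says nothing. The cleaner finish, and the one the paper uses, is to fix $\boldtheta_{\ne k}$, take analytic eigenvalue branches along the $\theta_k$-line (branches remain analytic through crossings), observe that each branch is non-constant by your determinant argument and therefore has only finitely many critical points, and conclude by Fubini. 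With those two repairs your proof is complete.
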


Naturally, since Asch--Knauf already worked in higher dimension, the result is not surprising and could indeed be considered a folklore result, since it is simply a convex combination of \cite{AscKna1998Nonlin} and \cite{DamLukYes2015CMP}. Nevertheless, we felt it would be worthwhile to have an essentially self-contained exposition of the proof somewhere in the literature.

In recent years, there has also been substantial interest in studying the phenomenon of ballistic motion in specific aperiodic models. For instance, this has been established for limit-periodic and quasi-periodic models \cite{Fillman2017CMP, GeKach2020Preprint, Kachkovskiy2016CMP, Kachkovskiy2020Preprint, KarpLeeShte2017CMP, ZhangZhao2017CMP}.

In Section~\ref{sec:floq} we discuss a direct integral decomposition of $J$, and then explain in Section~\ref{sec:ball} how to use this to prove Theorem~\ref{t:ball}. Since the paper is expository in nature, we aim to supply proofs so that the article is self-contained, modulo background facts from functional analysis and analytic perturbation theory.

\subsection*{Acknowledgements}

J.F. is grateful to Ilya Kachkovskiy and Milivoje Lukic for helpful conversations. Work supported in part by Simons Foundation Collaboration Grant \#711663.

\section{Decomposition of $J$} \label{sec:floq}

%\subsection{Position Space Decomposition}
We first explain how to decompose $J$ as a direct integral of operators on the fundamental domain with suitable self-adjoint boundary conditions.
The reader is referred to \cite{ReedSimon4} for additional background about direct integrals.
 More precisely, let $\bbT^d = \bbR^d/\bbZ^d$, put 
\[\Gamma = \bbZ^d \cap \prod_{j=1}^d [0,q_j) = \{0,1,\ldots,q_1-1\} \times \cdots \times \{0,1,\ldots,q_d-1\},\]
and consider $\scrH(\boldtheta) = \scrH(\boldtheta,\bm q) \subset \ell^\infty(\bbZ^d)$ comprising all those $\psi:\bbZ^d \to \bbC$ such that 
\begin{equation}
\psi_{\bm{x}+\bm{nq}} = e^{2\pi i \langle \boldtheta,\bm{n} \rangle}\psi_{\bm{x}}.
\end{equation}
With the inner product
\[\langle \psi,\varphi\rangle_{\scrH(\boldtheta)} = \sum_{\bm{x} \in \Gamma} \overline{\psi_{\bm{x}}} \varphi_{\bm{x}},\]
$\scrH(\boldtheta)$ becomes a Hilbert space of dimension $\bar q = \#\Gamma = \prod_{j=1}^d q_j$.
We will use $d\boldtheta$ to denote the Lebesgue measure on $\bbT^d$.

\begin{lemma}
If $J$ is $\bm{q}$-periodic, then $J$ maps $\scrH(\boldtheta,\bm{q})$ into itself for all $\boldtheta$.
\end{lemma}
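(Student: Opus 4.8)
The plan is to verify directly that if $\psi \in \scrH(\boldtheta,\bm q)$, then $J\psi$ satisfies the same quasi-periodicity relation
\[
[J\psi]_{\bm x + \bm{nq}} = e^{2\pi i \langle \boldtheta, \bm n\rangle}[J\psi]_{\bm x}
\quad\text{for all }\bm x \in \bbZ^d,\ \bm n \in \bbZ^d,
\]
since membership in $\scrH(\boldtheta,\bm q)$ is defined by exactly this condition. The computation is a bookkeeping exercise: I would write out $[J\psi]_{\bm x + \bm{nq}}$ using the defining formula for $J$, namely
\[
[J\psi]_{\bm x + \bm{nq}} = \sum_{\bm y \sim \bm x + \bm{nq}} a_{\bm x + \bm{nq},\bm y}\,\psi_{\bm y} + b_{\bm x + \bm{nq}}\,\psi_{\bm x + \bm{nq}},
\]
and then reindex the sum. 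The natural substitution is $\bm y = \bm z + \bm{nq}$, which sets up a bijection between the neighbors of $\bm x + \bm{nq}$ and the neighbors of $\bm x$, because $\|\bm z + \bm{nq} - (\bm x + \bm{nq})\|_1 = \|\bm z - \bm x\|_1$, so $\bm y \sim \bm x + \bm{nq}$ if and only if $\bm z \sim \bm x$.

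After reindexing, the two ingredients I would invoke are the $\bm q$-periodicity of the coefficients and the quasi-periodicity of $\psi$. The periodicity condition \eqref{eq:perVdef} applied to the lattice $\bbL = \bigoplus_j q_j\bbZ$ (with the lattice element $\bm\ell = \bm{nq}$) gives $a_{\bm x + \bm{nq}, \bm z + \bm{nq}} = a_{\bm x,\bm z}$ and $b_{\bm x + \bm{nq}} = b_{\bm x}$. The relation defining $\scrH(\boldtheta,\bm q)$ gives $\psi_{\bm z + \bm{nq}} = e^{2\pi i\langle\boldtheta,\bm n\rangle}\psi_{\bm z}$ and likewise $\psi_{\bm x + \bm{nq}} = e^{2\pi i\langle\boldtheta,\bm n\rangle}\psi_{\bm x}$. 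Substituting these into the reindexed sum, every term acquires the common scalar factor $e^{2\pi i\langle\boldtheta,\bm n\rangle}$, which I can pull out front to obtain exactly $e^{2\pi i\langle\boldtheta,\bm n\rangle}[J\psi]_{\bm x}$, as required. I should also note at the outset that $J\psi$ lies in $\ell^\infty(\bbZ^d)$, since $a$ and $b$ are assumed bounded and each lattice point has exactly $2d$ neighbors, so the defining sum is finite and uniformly bounded; hence $J\psi$ is a legitimate element of the ambient space in which $\scrH(\boldtheta,\bm q)$ sits.

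There is no real obstacle here; the only point requiring any care is getting the index substitution exactly right so that the neighbor relation is preserved and the phase factors combine correctly. In particular, one must confirm that the single scalar $e^{2\pi i\langle\boldtheta,\bm n\rangle}$ governs both the off-diagonal terms (through $\psi_{\bm z + \bm{nq}}$) and the diagonal term (through $\psi_{\bm x + \bm{nq}}$), which it does because the same $\bm n$ appears throughout. This is genuinely a one-line verification once the notation is unpacked, which is appropriate for a lemma whose role is simply to justify the Floquet/direct-integral decomposition developed in the remainder of Section~\ref{sec:floq}.
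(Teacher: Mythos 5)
Your proposal is correct and is essentially identical to the paper's own proof: both verify the quasi-periodicity relation for $J\psi$ by reindexing the neighbor sum via $\bm y = \bm z + \bm{nq}$, applying \eqref{eq:perVdef} to the coefficients, and factoring out the common phase $e^{2\pi i \langle \boldtheta, \bm n\rangle}$. Your additional remark that $J\psi \in \ell^\infty(\bbZ^d)$ is a small, harmless refinement the paper leaves implicit.
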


\begin{proof}
This is a short calculation. If $\psi \in \scrH(\boldtheta)$, then
\begin{align*}
[J\psi]_{\bm{x}+\bm{nq}}
& =  \sum_{\bm{x} \sim\bm{y}} a_{\bm{x}+\bm{nq},\bm{y}+\bm{nq}} \psi_{\bm{y}+\bm{nq}} + b_{\bm{x}+\bm{nq}} \psi_{\bm{x}+\bm{nq}} \\
& =  e^{2\pi i \langle \boldtheta,\bm{n} \rangle}\sum_{\bm{x} \sim\bm{y}} a_{\bm{x},\bm{y}} \psi_{\bm{y}} + e^{2\pi i \langle \boldtheta,\bm{n} \rangle} b_{\bm{x}} \psi_{\bm{x}} \\
& =e^{2\pi i \langle \boldtheta,\bm{n} \rangle}[J\psi]_{\bm{x}},
\end{align*}
whence $J\psi \in \scrH(\boldtheta)$.
\end{proof}

In view of the lemma, we may define $J(\boldtheta) = J|_{\scrH(\boldtheta)}$ for each $\boldtheta \in \bbT^d$. 
Writing $\bbC^\Gamma$ for the space of functions $\Gamma \to \bbC$, one can view $\scrH(\boldtheta) \cong \bbC^\Gamma$ via the identification
\begin{equation} \label{eq:scrhthetatoCGamma} \bbC^\Gamma \ni \delta_{\bm{x}} \mapsto \sum_{\bm{n} \in \bbZ^d} e^{2\pi i \langle \boldtheta,\bm{n} \rangle} \delta_{\bm{x}+\bm{nq}} \in \scrH(\boldtheta),\end{equation}
so we also freely consider $J(\boldtheta)$ as a linear operator on $\bbC^\Gamma$. 

To describe the decomposition of $J$, define 
\begin{align*} \scrH_1 & = \int_{\bbT^d}^\oplus \scrH(\boldtheta) \, d\boldtheta,\end{align*}
which consists of measurable functions $f$ mapping $\bbT^d$ into $\bigcup_{\boldtheta \in \bbT^d} \scrH(\boldtheta)$ such that $f(\boldtheta) \in \scrH(\boldtheta)$ for all $\boldtheta$ and
\[\|f\|_{\scrH_1}^2 := \int_{\bbT^d} \|f(\boldtheta) \|^2_{\scrH(\boldtheta)} \, d\boldtheta < \infty.\] 
Equipped with the inner product
\[\langle f, g \rangle_{\scrH_1} = \int_{\bbT^d} \langle f(\boldtheta), g(\boldtheta) \rangle_{\scrH(\boldtheta)} \, d\boldtheta,\]
$\scrH_1$ is a Hilbert space; see \cite{ReedSimon4} for details.
Write $f(\boldtheta,\bm{x})$ for the $\bm{x}$th coordinate of $f(\boldtheta)$. Identifying the fibers of $\scrH_1$ with $\bbC^\Gamma$ as in \eqref{eq:scrhthetatoCGamma}, we can also view $\scrH_1$ simply as the collection of square-integrable maps $\bbT^d \to \bbC^\Gamma$, which we shall do freely when it is convenient to do so.

For $\psi \in \ell^1(\bbZ^d)$, define
\[[\scrF\psi](\boldtheta,\bm{x}) = \sum_{\bm{m} \in \bbZ^d} \psi_{\bm{x}+\bm{mq}} e^{2 \pi i\langle \boldtheta, \bm{m} \rangle}. \]
\begin{lemma}
For every $\psi \in \ell^1(\bbZ^d)$, $\scrF\psi \in \scrH_1$, $\|\scrF \psi\|_{\scrH_1} = \|\psi\|_{\ell^2(\bbZ^d)}$, and the image of $\ell^1(\bbZ^d)$ is dense in $\scrH_1$. In particular, $\scrF$ extends to a unitary operator $\scrF :\ell^2(\bbZ^d) \to \scrH_1$.
\end{lemma}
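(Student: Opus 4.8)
The plan is to verify the three asserted properties in turn---fiber membership, the Plancherel (isometry) identity, and density of the range---and then invoke the standard fact that an isometry defined on a dense subspace whose range is dense extends to a unitary. First I would check that $\scrF\psi$ is a well-defined element of $\scrH_1$. For $\psi \in \ell^1(\bbZ^d)$ the series defining $[\scrF\psi](\boldtheta,\bm{x})$ converges absolutely and uniformly in $\boldtheta$, since $\sum_{\bm{m}} |\psi_{\bm{x}+\bm{mq}}| \le \norm{\psi}_{\ell^1(\bbZ^d)}$ (Weierstrass $M$-test); in particular $\boldtheta \mapsto [\scrF\psi](\boldtheta,\bm{x})$ is continuous, hence measurable, for each $\bm{x} \in \Gamma$. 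That $\scrF\psi(\boldtheta)$ lands in the correct fiber follows by substituting $\bm{m} \mapsto \bm{m} - \bm{n}$ in the sum defining $[\scrF\psi](\boldtheta,\bm{x}+\bm{nq})$, which reproduces the defining quasi-periodicity relation of the fiber (up to the reflection $\boldtheta \mapsto -\boldtheta$, which is immaterial after integrating over all of $\bbT^d$).

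The heart of the argument is the isometry identity, and I would obtain it fiberwise via Parseval. Fixing $\bm{x} \in \Gamma$ and applying Parseval for the orthonormal basis $\{e^{2\pi i \langle\boldtheta,\bm{m}\rangle}\}_{\bm{m} \in \bbZ^d}$ of $L^2(\bbT^d)$ to the single-coordinate series gives
\[\int_{\bbT^d} \abs{[\scrF\psi](\boldtheta,\bm{x})}^2 \, d\boldtheta = \sum_{\bm{m} \in \bbZ^d} \abs{\psi_{\bm{x}+\bm{mq}}}^2.\]
Summing over $\bm{x} \in \Gamma$ and recalling that $\bbZ^d = \bigsqcup_{\bm{x} \in \Gamma}(\bm{x} + \bm{q}\bbZ^d)$ is a disjoint union of cosets---so that every $\bm{z} \in \bbZ^d$ is uniquely $\bm{x}+\bm{mq}$ with $\bm{x} \in \Gamma$ and $\bm{m} \in \bbZ^d$---collapses the double sum to $\sum_{\bm{z} \in \bbZ^d}\abs{\psi_{\bm{z}}}^2$, which is precisely $\norm{\scrF\psi}_{\scrH_1}^2 = \norm{\psi}_{\ell^2(\bbZ^d)}^2$. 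Thus $\scrF$ is norm-preserving on $\ell^1(\bbZ^d)$, and since $\ell^1(\bbZ^d)$ is dense in $\ell^2(\bbZ^d)$ it extends to an isometry $\scrF:\ell^2(\bbZ^d)\to\scrH_1$.

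For density of the range, and hence surjectivity of this extension, I would compute $\scrF$ on the standard basis vectors. A direct evaluation gives $[\scrF\delta_{\bm{x}+\bm{mq}}](\boldtheta,\bm{y}) = e^{2\pi i\langle\boldtheta,\bm{m}\rangle}\,\delta_{\bm{x},\bm{y}}$ for $\bm{x},\bm{y} \in \Gamma$; that is, under the identification $\scrH_1 \cong L^2(\bbT^d)\otimes\bbC^\Gamma$ from \eqref{eq:scrhthetatoCGamma}, $\scrF$ sends $\delta_{\bm{x}+\bm{mq}}$ to the element $\boldtheta \mapsto e^{2\pi i\langle\boldtheta,\bm{m}\rangle}\delta_{\bm{x}}$. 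As $\bm{x}$ ranges over $\Gamma$ and $\bm{m}$ over $\bbZ^d$, these are exactly an orthonormal basis of $L^2(\bbT^d)\otimes\bbC^\Gamma$, so their span---contained in the image of $\ell^1(\bbZ^d)$, since each $\delta_{\bm{z}}$ is finitely supported---is dense in $\scrH_1$. An isometry of Hilbert spaces has closed range, so dense range forces the range to be everything; hence $\scrF$ is a surjective isometry, i.e., unitary.

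I expect the only genuine subtleties to be organizational: identifying $\scrH_1$ with $L^2(\bbT^d)\otimes\bbC^\Gamma$ correctly so that Parseval applies coordinatewise, and justifying that the measurable fiber-valued function $\scrF\psi$ indeed represents an element of the direct integral (handled by the uniform convergence above). The one step most worth stating explicitly is the coset partition of $\bbZ^d$, as it is precisely what promotes the fiberwise Parseval identities to the global $\ell^2$ isometry.
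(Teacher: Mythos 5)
Your proof is correct and takes essentially the same route as the paper: both arguments rest on the orthonormal basis $\{e^{2\pi i \langle \cdot, \bm{m}\rangle}\}_{\bm{m} \in \bbZ^d}$ of $L^2(\bbT^d)$ together with the coset partition $\bbZ^d = \bigsqcup_{\bm{x} \in \Gamma} (\bm{x} + \bm{q}\bbZ^d)$; the paper verifies that the vectors $\scrF \delta_{\bm{x}+\bm{nq}}$ form an orthonormal basis of $\scrH_1$ and calls the rest immediate, while you carry out that same basis computation (your density step) and additionally spell out the isometry via fiberwise Parseval. Your parenthetical remark about the reflection $\boldtheta \mapsto -\boldtheta$ is also well taken: the paper's sign conventions in the definitions of $\scrH(\boldtheta)$ and $\scrF$ are indeed mismatched, a harmless typo that your argument handles correctly.
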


\begin{proof}
For $\bm{x} \in \Gamma$ and $\bm{n} \in \bbZ^d$, denote $\scrF \delta_{\bm{x}+\bm{nq}} = \varphi_{\bm{x},\bm{n}}$ and note that
\begin{equation} \label{eq:varphinx}\varphi_{\bm{x},\bm{n}}(\boldtheta) = \sum_{\bm{m} \in \bbZ^d} e^{2 \pi  i \langle \boldtheta, \bm{n}-\bm{m} \rangle} \delta_{\bm{x+\bm{mq}}}.\end{equation}
Since $\set{e^{2\pi i \langle \cdot, \bm{n} \rangle} : \bm{n} \in \bbZ^d}$ is an orthonormal basis of $L^2(\bbT^d)$, one can check that $\set{\varphi_{\bm{x},\bm{n}}: \bm{x} \in \Gamma, \, \bm{n} \in \bbZ^d}$ is  an orthonormal basis of $\scrH_1$, so the lemma follows immediately.
\end{proof}

The unitary operator $\scrF$ ``diagonalizes'' $J$ in the sense that it transforms $J$ to a (matrix) multiplication operator given by pointwise multiplication by $J(\boldtheta)$ on $\scrH_1$. Concretely, define a linear operator $\widehat{J} : \scrH_1 \to \scrH_1$ by
\begin{equation} \label{eq:Jhatdirectint}
 [\widehat{J}g](\boldtheta) = J(\boldtheta) g(\boldtheta). \end{equation}
It is convenient to use the direct integral notation for operators enjoying a decomposition as in \eqref{eq:Jhatdirectint}; for instance, we will write
\[\widehat{J} = \int_{\bbT^d}^\oplus J(\boldtheta)\, d\boldtheta. \]

\begin{theorem} \label{t:floquet1}
$\widehat{J} = \scrF J \scrF^*$.
\end{theorem}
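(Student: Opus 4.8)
The plan is to verify the operator identity $\widehat{J} = \scrF J \scrF^*$ by checking it on the dense subspace $\ell^1(\bbZ^d) \subset \ell^2(\bbZ^d)$, where all the relevant sums converge absolutely and the Floquet transform $\scrF$ is given by an explicit formula. Since $\scrF$ is unitary (by the preceding lemma) and $J$ is bounded, the operator $\scrF J \scrF^*$ is a bounded operator on $\scrH_1$; likewise $\widehat{J}$ is bounded because the fiber operators $J(\boldtheta)$ act on the finite-dimensional space $\bbC^\Gamma$ with matrix entries controlled uniformly in $\boldtheta$ by the bounds on $a$ and $b$. Two bounded operators agreeing on a dense subspace are equal, so it suffices to establish $[\scrF J \psi](\boldtheta) = J(\boldtheta)[\scrF\psi](\boldtheta)$ for $\psi \in \ell^1(\bbZ^d)$ and a.e.\ $\boldtheta$, or equivalently $\scrF J = \widehat{J} \scrF$ on $\ell^1$.

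First I would compute $[\scrF J \psi](\boldtheta, \bm{x})$ directly from the definitions. By definition,
\[
[\scrF J\psi](\boldtheta,\bm{x}) = \sum_{\bm{m}\in\bbZ^d} [J\psi]_{\bm{x}+\bm{mq}} \, e^{2\pi i \langle \boldtheta,\bm{m}\rangle},
\]
and I would substitute the definition of $J$, writing $[J\psi]_{\bm{x}+\bm{mq}}$ as the sum over neighbors $\bm{y}\sim\bm{x}$ of $a_{\bm{x}+\bm{mq},\bm{y}+\bm{mq}}\psi_{\bm{y}+\bm{mq}}$ plus the diagonal term $b_{\bm{x}+\bm{mq}}\psi_{\bm{x}+\bm{mq}}$. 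The key move is to invoke $\bm{q}$-periodicity \eqref{eq:perVdef}, which gives $a_{\bm{x}+\bm{mq},\bm{y}+\bm{mq}} = a_{\bm{x},\bm{y}}$ and $b_{\bm{x}+\bm{mq}} = b_{\bm{x}}$, pulling the coefficients out of the sum over $\bm{m}$. What remains is $\sum_{\bm{x}\sim\bm{y}} a_{\bm{x},\bm{y}}[\scrF\psi](\boldtheta,\bm{y}) + b_{\bm{x}}[\scrF\psi](\boldtheta,\bm{x})$, which is precisely $[J(\boldtheta)\scrF\psi](\boldtheta)$ evaluated at $\bm{x}$, since $J(\boldtheta)$ is the restriction of $J$ to $\scrH(\boldtheta)$ and $\scrF\psi(\boldtheta) \in \scrH(\boldtheta)$. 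The absolute convergence afforded by $\psi\in\ell^1$ and boundedness of $a,b$ justifies interchanging the finite neighbor sum with the sum over $\bm{m}$.

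The one subtlety worth care, rather than a genuine obstacle, is the bookkeeping connecting the two pictures of $\scrH(\boldtheta)$: the concrete realization as quasiperiodic sequences on $\bbZ^d$ versus the fiber $\bbC^\Gamma$ under the identification \eqref{eq:scrhthetatoCGamma}. When a neighbor $\bm{y}$ of $\bm{x}\in\Gamma$ lies outside $\Gamma$, one must use the quasiperiodicity relation $\psi_{\bm{y}} = e^{2\pi i \langle\boldtheta,\bm{n}\rangle}\psi_{\bm{y}-\bm{nq}}$ to fold $\bm{y}$ back into $\Gamma$, which is exactly the boundary-condition mechanism encoded in $J(\boldtheta)$ acting on $\scrH(\boldtheta)$. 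Since the computation above is carried out entirely within the quasiperiodic-sequence realization, where $J$ acts literally as the original difference operator, this folding is automatic and no separate verification is needed; the identity $[J\psi]_{\bm{x}} = [J(\boldtheta)\psi]_{\bm{x}}$ holds by the very definition $J(\boldtheta) = J|_{\scrH(\boldtheta)}$. I therefore expect no essential difficulty: the proof is a direct unwinding of definitions with periodicity as the sole structural input, and the main thing to get right is the order of summation and the consistent use of the two descriptions of the fiber spaces.
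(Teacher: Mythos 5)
Your proposal is correct and follows essentially the same route as the paper: a direct unwinding of definitions on a dense set, with $\bm{q}$-periodicity used to pull the coefficients $a_{\bm{x},\bm{y}}, b_{\bm{x}}$ out of the sum over translates, and boundedness plus density to conclude. The only cosmetic difference is that the paper runs the computation on the orthonormal basis $\varphi_{\bm{x},\bm{n}} = \scrF\delta_{\bm{x}+\bm{nq}}$ rather than on general $\psi \in \ell^1(\bbZ^d)$, which amounts to the same calculation specialized to delta functions.
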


\begin{proof}
This follows from a direct calculation.
Recall $\varphi_{\bm{x},\bm{n}} = \scrF \delta_{\bm{x}+\bm{nq}}$ for $\bm{x} \in \Gamma$ and $\bm{n} \in \bbZ^d$. Since $\varphi_{\bm{x},\bm{n}}(\boldtheta) \in \scrH(\boldtheta)$ for each $\boldtheta \in \bbT^d$, \eqref{eq:varphinx} yields
\begin{align*} J(\boldtheta) \varphi_{\bm{x},\bm{n}}(\boldtheta) & = J \varphi_{\bm{x},\bm{n}}(\boldtheta) \\
& = \sum_{\bm{m} \in \bbZ^d} e^{2 \pi  i \langle \boldtheta, \bm{n}-\bm{m} \rangle}J \delta_{\bm{x+\bm{mq}}} \\
& = \sum_{\bm{m} \in \bbZ^d} e^{2 \pi  i \langle \boldtheta, \bm{n}-\bm{m} \rangle} \left( \sum_{\bm{z} \sim \bm{x}+\bm{mq}} a_{\bm{\bm{x}+\bm{mq}}, \bm{z}} \delta_{\bm{z}} + b_{\bm{x+\bm{mq}}}\delta_{\bm{x+\bm{mq}}} \right) \\
& = \sum_{\bm{m} \in \bbZ^d} e^{2 \pi  i \langle \boldtheta, \bm{n}-\bm{m} \rangle} \left( \sum_{\bm{y} \sim \bm{x}} a_{\bm{x}, \bm{y}} \delta_{\bm{y+\bm{mq}}} + b_{\bm{x}}\delta_{\bm{x+\bm{mq}}} \right) \\
& = \sum_{\bm{y} \sim \bm{x}} a_{\bm{x}, \bm{y}} \varphi_{\bm{y,\bm{n}}}(\boldtheta) + b_{\bm{x}}\varphi_{\bm{x}, \bm{n}} (\boldtheta).
\end{align*}
On the other hand, a direct calculation from the definitions yields
\begin{align*} [\scrF J \scrF^* \varphi_{\bm{x},\bm{n}}](\boldtheta) & = [\scrF J \delta_{\bm{x}+\bm{nq}}](\boldtheta) \\
& =  \left[\scrF \left(\sum_{\bm{y} \sim \bm{x}} a_{\bm x, \bm y}\delta_{\bm{y}+\bm{nq}} +  b_{\bm x}\delta_{\bm{x}+\bm{nq}}\right)\right](\boldtheta) \\
& =  \sum_{\bm{y} \sim \bm{x}} a_{\bm x, \bm y}\varphi_{\bm{y},\bm{n}}(\boldtheta) +  b_{\bm x}\varphi_{\bm{x},\bm{n}}(\boldtheta).
\end{align*}
Thus $\widehat{J}\varphi_{\bm{x},\bm{n}} = \scrF J \scrF^*\varphi_{\bm{x},\bm{n}}$ for all $\bm{x}$ and $\bm{n}$. Since $\{\varphi_{\bm{x},\bm{n}}: \bm{x} \in \Gamma, \, \bm{n} \in \bbZ^d\}$ is a basis of $\scrH_1$, the theorem is proved.
\end{proof}

\begin{coro}
Let $\bar{q} = \prod_{j=1}^d q_j$,  let $E_1(\boldtheta) \leq \cdots \leq E_{\bar q}(\boldtheta)$ denote the eigenvalues of $J(\boldtheta)$, and define
\[I_k = \{E_k(\boldtheta) : \boldtheta \in \bbT^d\}.\]
Then
\[\sigma(H)= \bigcup_{k=1}^{\bar{q}} I_k = \bigcup_{\boldtheta \in \bbT^d} \sigma(J(\boldtheta)).\]
\end{coro}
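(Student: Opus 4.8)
The plan is to transfer the problem to the fibered operator $\widehat{J}$ and then invoke the description of the spectrum of a direct integral. Since Theorem~\ref{t:floquet1} gives $\widehat{J} = \scrF J \scrF^*$ with $\scrF$ unitary, we have $\sigma(J) = \sigma(\widehat{J})$, so it suffices to compute $\sigma(\widehat{J})$. The standard spectral theorem for direct integrals of bounded self-adjoint operators (see \cite{ReedSimon4}) asserts that $\lambda \in \sigma(\widehat{J})$ if and only if for every $\epsilon > 0$ the set
\[\{\boldtheta \in \bbT^d : \sigma(J(\boldtheta)) \cap (\lambda - \epsilon, \lambda + \epsilon) \neq \emptyset \}\]
has positive Lebesgue measure. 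The remaining task is to show that this ``essential union'' of the fiber spectra coincides with the genuine union $\bigcup_{\boldtheta} \sigma(J(\boldtheta)) = \bigcup_{k} I_k$; the bridge between the two is the continuity of the fibers in $\boldtheta$.

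First I would record that the entries of $J(\boldtheta)$, viewed as a matrix on $\bbC^\Gamma$ via \eqref{eq:scrhthetatoCGamma}, are trigonometric polynomials in $\boldtheta$, so $\boldtheta \mapsto J(\boldtheta)$ is real-analytic and in particular continuous. By the standard fact that the increasingly ordered eigenvalues of a continuous family of Hermitian matrices are themselves continuous, each map $\boldtheta \mapsto E_k(\boldtheta)$ is continuous on $\bbT^d$. Since $\bbT^d$ is compact and connected, each $I_k = E_k(\bbT^d)$ is a compact interval, and hence $\bigcup_{k=1}^{\bar q} I_k$ is compact, in particular closed.

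For the inclusion $\bigcup_k I_k \subseteq \sigma(\widehat{J})$, fix $\lambda \in I_k$, say $\lambda = E_k(\boldtheta_0)$. For each $\epsilon > 0$, continuity of $E_k$ makes $\{\boldtheta : |E_k(\boldtheta) - \lambda| < \epsilon\}$ an open neighborhood of $\boldtheta_0$, hence a set of positive measure on which $\sigma(J(\boldtheta))$ meets $(\lambda - \epsilon, \lambda + \epsilon)$; the criterion above then gives $\lambda \in \sigma(\widehat{J})$. For the reverse inclusion, suppose $\lambda \notin \bigcup_k I_k$. Closedness yields $\delta := \dist(\lambda, \bigcup_k I_k) > 0$, and then for $\epsilon < \delta$ the set in the criterion is empty, so $\lambda \notin \sigma(\widehat{J})$. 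Combining the two inclusions with $\sigma(J) = \sigma(\widehat{J})$ and $\bigcup_{\boldtheta} \sigma(J(\boldtheta)) = \bigcup_k I_k$ finishes the proof.

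The only genuine obstacle is the justification of the direct-integral spectral criterion together with the continuity of the ordered eigenvalues; both are standard (the former from \cite{ReedSimon4}, the latter from analytic perturbation theory), so once these are in hand the measure-theoretic argument is elementary. The key mechanism is that an open subset of $\bbT^d$ always has positive measure, while the complement of the compact set $\bigcup_k I_k$ contributes nothing.
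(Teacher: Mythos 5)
Your proposal is correct and follows essentially the same route as the paper: the paper's proof simply declares the result an immediate consequence of Theorem~\ref{t:floquet1} together with the continuity of $\boldtheta \mapsto E_k(\boldtheta)$, which are precisely the two ingredients you use. Your writeup merely fills in the details the paper leaves implicit, namely the Reed--Simon essential-union criterion for the spectrum of a direct integral and the topological argument (open sets have positive measure; $\bigcup_k I_k$ is closed) that converts the essential union into the genuine union.
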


\begin{proof}
Since $E_k(\boldtheta)$ depends continuously on $\boldtheta$ for every $1 \le k \le \bar{q}$, this is an immediate consequence of Theorem~\ref{t:floquet1}.
\end{proof}

For later use, we note the following:

\begin{lemma} \label{lem:eigenvalues}
For a.e.\ $\boldtheta \in \bbT^d$ and each $1\le k \le d$, $\frac{\partial E_j}{\partial \theta_k}(\boldtheta)$ exists and is nonzero.
\end{lemma}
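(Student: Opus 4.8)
The plan is to treat the two assertions—existence of the partials and their nonvanishing—separately, reducing the nonvanishing to a one-dimensional statement about the absence of flat bands in each coordinate direction.

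For existence I would first record that the ordered eigenvalues depend Lipschitz-continuously on $\boldtheta$. The matrix entries of $J(\boldtheta)$ acting on $\bbC^\Gamma$ are trigonometric polynomials in $\boldtheta$: the only $\boldtheta$-dependence comes from the hopping terms crossing the boundary of $\Gamma$, which carry factors $e^{\pm 2\pi i \theta_k}$. Thus $\boldtheta \mapsto J(\boldtheta)$ is real-analytic, in particular Lipschitz in operator norm on the compact torus, and by Weyl's perturbation inequality $|E_j(\boldtheta)-E_j(\boldtheta')| \le \norm{J(\boldtheta)-J(\boldtheta')}$. Hence each $E_j$ is Lipschitz, so by Rademacher's theorem it is differentiable at a.e.\ $\boldtheta$, and in particular all partials $\partial E_j/\partial\theta_k$ exist a.e.

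For the nonvanishing, fix a band index $j$ and a direction $k$, and use Fubini: it suffices to show that for each fixed choice of the remaining variables $\boldtheta_{-k} = (\theta_i)_{i\ne k}$, the set of $\theta_k \in \bbT$ at which $\partial E_j/\partial\theta_k$ exists and vanishes is a one-dimensional null set. With $\boldtheta_{-k}$ fixed I would organize $J(\cdot,\boldtheta_{-k})$ as a cyclic block-Jacobi operator in the $k$-direction: splitting $\Gamma$ into $q_k$ layers indexed by $x_k$, the coupling $T_i$ between consecutive layers is the diagonal matrix of $k$-bonds $a_{\bm{x},\bm{x}+\bm{e}_k}$, hence invertible since all $a \ne 0$, while $\theta_k$ enters only through the wrap-around corner block via $z = e^{2\pi i \theta_k}$. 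Using invertibility of the $T_i$ to build the transfer matrices $M_i$ and the monodromy $M(c) = M_{q_k-1}\cdots M_0$, one obtains the Floquet criterion $c \in \sigma\big(J(\theta_k,\boldtheta_{-k})\big)$ if and only if $\det(M(c)-zI)=0$. Since $\det M_i \ne 0$ for every $i$, the characteristic polynomial $z \mapsto \det(M(c)-zI)$ is a genuine polynomial of degree $2m$, where $m = \bar{q}/q_k$, with nonvanishing constant term $\det M(c)$, and so has only finitely many roots.

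This last fact is exactly what excludes flat bands, which is the heart of the matter and the place where the hypothesis $a \ne 0$ enters: if some band were constant, $E_j(\cdot,\boldtheta_{-k}) \equiv c_0$ on a subinterval of $\bbT$, then $z = e^{2\pi i\theta_k}$ would be a root of the nonzero polynomial $\det(M(c_0)-zI)$ along a whole arc, which is impossible. To convert ``no flat bands'' into ``nonzero derivative a.e.,'' I would invoke Rellich's theorem: on intervals covering $\bbT$, the eigenvalues of the one-parameter analytic family $\theta_k \mapsto J(\theta_k,\boldtheta_{-k})$ are given by finitely many real-analytic branches $\lambda_\alpha(\theta_k)$. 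Each such branch is nonconstant on every subinterval, so $\lambda_\alpha'$ has only isolated zeros, and any two branches that are not identically equal coincide only on an isolated set. The union of these exceptional sets is a one-dimensional null set, off of which each ordered eigenvalue $E_j$ locally agrees with a single smooth branch; hence wherever $\partial E_j/\partial\theta_k$ exists there, it equals some $\lambda_\alpha' \ne 0$. Fubini then shows $\{\partial E_j/\partial\theta_k = 0\}$ is null in $\bbT^d$. The main obstacle is precisely this passage from nonconstancy to a.e.\ nonvanishing: continuity and nonconstancy alone do not forbid a derivative vanishing on a positive-measure set (singular monotone functions show this), so the real-analytic branch structure is essential, and one must also confine eigenvalue crossings and any permanent degeneracies to a null set; the invertibility of the $k$-direction couplings $T_i$, guaranteed by $a \ne 0$, is what makes the transfer-matrix formalism and the nondegeneracy of $M(c)$ available in the first place.
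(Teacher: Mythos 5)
Your proof is correct, and it is worth comparing to the paper's, because you prove more than the paper actually writes down. The paper's argument shares your skeleton: fix the variables $\theta_j$, $j \neq k$, regard $\theta_k \mapsto J(\boldtheta)$ as a self-adjoint analytic family, invoke Kato--Rellich analytic perturbation theory to obtain real-analytic eigenvalue branches $\lambda_r$, and use the fact that a nonconstant analytic function has a derivative with only isolated zeros. But at the decisive moment the paper merely asserts ``since $\lambda_r$ is non-constant\dots'' and leaves the verification to the reader; that nonconstancy is the heart of the lemma and the only place where the hypothesis $a \neq 0$ can enter (if the $k$-direction couplings were allowed to vanish, the operator could be independent of $\theta_k$ and every band flat in that direction). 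Your cyclic block-Jacobi/monodromy argument supplies exactly this missing step: invertibility of the diagonal coupling blocks (all $a_{\bm{x},\bm{x}+\bm{e}_k} \neq 0$) makes the transfer matrices well defined, and the Floquet criterion $\det\bigl(M(c_0)-zI\bigr)=0$ shows that a band constant near a value $c_0$ would force a nonzero polynomial of degree $2\bar{q}/q_k$ in $z$ to vanish along a whole arc of the unit circle, which is impossible; this is precisely the $d$-dimensional generalization of the discriminant argument of Damanik--Lukic--Yessen in $d=1$. Two minor remarks: your Weyl-plus-Rademacher step is sound but dispensable, since off the finite exceptional set in each one-dimensional slice the ordered eigenvalue coincides locally with a single analytic branch and is therefore differentiable there, which together with Fubini already gives existence a.e.; and permanent degeneracies are indeed harmless, exactly as you note, because identically equal branches still define a single analytic function that $E_j$ follows locally.
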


\begin{proof}
This follows from analytic eigenvalue perturbation theory \cite{Kato1980:PertTh}.
 We describe the broad strokes and leave the details to the reader.

Let $1 \le k \le d$, choose and fix $\theta_j \in \bbT$ for $j \neq  k$. For $s \in \bbT$, define $\boldtheta^{(k)}(s) \in \bbT^d$ by $\theta^{(k)}_j(s) = \theta_j$ for $j \neq k$ and $\theta_k^{(k)}(s)=s$. Extending into the complex plane, we have that \[A(s):=J(\boldtheta^{(k)}(s)), \quad s \in\bbC\] 
is an analytic family of matrices. As such, one can choose branches\footnote{Note that we use the different notation $\lambda$ to emphasize that in general, the analytic enumeration $\lambda_j(\boldtheta)$ need not necessarily coincide with the ordered enumeration $E_j(\boldtheta)$.} of the eigenvalues $\lambda_1(s),\ldots,\lambda_t(s)$ and the associated eigenprojections which are real-analytic functions of $s$. In fact, these will be holomorphic functions of $s \in \bbC$ away from a discrete set of points. We note that the multiplicity  $m_j$ of $\lambda_j$ is constant, again away from a discrete set. We refer the reader to Kato \cite{Kato1980:PertTh} for details about analytic perturbation theory. See especially \cite[Theorem~II.6.1]{Kato1980:PertTh}; see also \cite[Theorem~1.4.1 and Corollary~1.4.5]{Simon2015:CCA4}.

Since $\lambda_r$ is non-constant for each $1 \le r \le t$, it follows that $\partial E_j/\partial \theta_k$ exists and is nonzero for each $1 \le j \le \bar{q}$ and a.e.\ $\boldtheta \in \bbT^d$.

%To show that $\lambda_r$ is non-constant, let us note that we can find the eigenvalues of $\Delta = J_{1,0}$ explicitly. For $\boldtheta$, the eigenvalues of $\Delta(\boldtheta)$ are $\cos(2\pi(\bm{\ell}\cdot\boldtheta)/\bm{q})$ as $\bm{\ell} \in \Gamma$. Choosing $s_0 \in \bbR$ suitably, we can guarantee that $\sin(\bm{\ell} \cdot \boldtheta^{(k)}(s_0)/\bm{q}) \neq 0$ for each $\bm{\ell}$, so $\Im \lambda_r(s_0+it)$ and $\Re \lambda_r(s_0+it)$ go to infinity as $t \uparrow \infty$. By standard perburbative arguments, it follows that $\lambda_r$ for $J$ also goes to infinity and hence is non-constant.
\end{proof}

%\subsection{Momentum Representation}
%Let us describe a slightly different perspective which we do not employ in the present note, but which is sometimes useful. Recall $\bbL= \bm{q}\bbZ^d = \bigoplus_{j=1}^d q_j \bbZ$ denotes the lattice of periods. Let $\bbL^* = \bm{q}^{-1}\bbZ^d = \bigoplus_{j=1}^d q_j^{-1}\bbZ$ denote the dual lattice, let \[\bbB^* = \prod_{j=1}^d \left[0, \frac{1}{q_j} \right)\] denote a fundamental domain for $\bbR/\bbL^*$, put $\bbL_0^* = \bbL^* \cap [0,1)^d$. Consider the Hilbert space $\scrH_2 = L^2(\bbB^* \times \bbL_0^*)$ (with underlying measure given by the product of Lebesgue and counting measures) and define \[[\scrF_2 u](\bm{b}, \bm\ell) = \widehat{u}(\bm{b}+\bm{\ell}),\] where $\widehat{u}$ denotes the standard Fourier transform \[\widehat{u}(\boldtheta) = \sum_{\bm{n} \in \bbZ^d} u_{\bm{n}} e^{2\pi i \langle\boldtheta, \bm{n} \rangle}\]

%\begin{lemma}The operator $\scrF_2$ is unitary.\end{lemma}

%\begin{proof} Let $u$ be given and $g = \scrF_2 u$. One has\begin{align*} \|g\|^2_{\scrH_2} & = \sum_{\bm{\ell} \in \bbL_0^*} \int_{\bbB^*} |g(\bm{b},\bm{\ell})|^2 \, d\bm{b} \\ & = \sum_{\bm{\ell} \in \bbL_0^*} \int_{\bbB^*} \left|\widehat{u}(\bm{b},\bm{\ell})\right|^2 \, d\bm{b} \\ & = \int_{[0,1)^d} \left| \widehat{u}(\boldtheta) \right|^2 \, d \boldtheta \\ & = \|\widehat{u}\|^2 = \|u\|^2, \end{align*} by unitarity of the Fourier transform.\end{proof}

\section{Ballistic Motion} \label{sec:ball}

Let us make a few observations. Formally, for each $1 \le k \le d$,
\[\frac{d}{dt} X_k(t) = iJe^{itJ}X_ke^{-itJ} -ie^{itJ}X_kJ e^{-itJ} = P_k(t),\]
where $P_k = i[J,X_k] = i(JX_k-X_kJ)$. Thus, 
\begin{equation} \label{eq:xkavinteg} \frac{X_k(t)}{t} = \frac{X_k}{t} + \frac{1}{t} \int_0^s P_k(s) \, ds,\end{equation}
so one wants to understand the time averages of $P_k$. Since the integral equation \eqref{eq:xkavinteg}  involves the unbounded operator $X_k$, some care is needed, so let us make this precise. For each $N \in \bbN$, let $X_{k,N}$ denote the bounded operator given by 
\[X_{k,N} \delta_{\bm{x}} = \begin{cases} x_k \delta_{\bm{x}} & |x_k| \leq N \\ N\delta_{\bm{x}} & \text{otherwise},\end{cases}\] 
and define $P_{k,N} = i[J,X_{k,N}]$.

\begin{prop}\label{prop:xkintegral}
Let $J$ be a bounded Jacobi matrix on $\ell^2(\bbZ^d)$. For all $k$ and $t$, $D(X_k(t)) = D(X_k)$ and one has
\begin{equation} \label{eq:xkintegraleq} X_k(t) \psi = X_k \psi + \int_0^t P_k(s) \psi \, ds.\end{equation}
\end{prop}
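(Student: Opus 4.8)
The plan is to reduce everything to the bounded truncations $X_{k,N}$, for which the time evolution is differentiable in operator norm, and then to pass to the limit $N \to \infty$. The crucial structural observation is that, although $X_k$ is unbounded, the commutator $P_k = i[J,X_k]$ is a \emph{bounded} operator: a direct computation using the nearest-neighbor structure gives
\[P_k\delta_{\bm x} = i\sum_{\bm y \sim \bm x} a_{\bm x,\bm y}(x_k - y_k)\delta_{\bm y},\]
and since $\bm y \sim \bm x$ forces $\abs{x_k - y_k} \le 1$ (with only the two $k$-direction neighbors contributing), boundedness of $a$ yields $\norm{P_k} \le 2\sup_{\bm x \sim \bm y}\abs{a_{\bm x,\bm y}}$. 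The same computation applied to the truncations gives
\[P_{k,N}\delta_{\bm x} = i\sum_{\bm y \sim \bm x} a_{\bm x,\bm y}(\tau_N(x_k) - \tau_N(y_k))\delta_{\bm y},\]
where $\tau_N$ denotes truncation of the $k$th coordinate to $[-N,N]$; since $\tau_N$ is $1$-Lipschitz, the $P_{k,N}$ are bounded \emph{uniformly} in $N$, and moreover $P_{k,N}\delta_{\bm x} \to P_k\delta_{\bm x}$ for each fixed $\bm x$ (the two agree once $N \ge \abs{x_k}+1$). Hence $P_{k,N} \to P_k$ strongly with a uniform norm bound, and consequently $P_{k,N}(s) \to P_k(s)$ strongly, uniformly for $s$ in compact sets.

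Next I would establish the integral identity at the truncated level. Because $J$ is bounded, $s \mapsto e^{isJ}$ is norm-differentiable, so $s \mapsto X_{k,N}(s)$ is differentiable in operator norm with
\[\frac{d}{ds} X_{k,N}(s) = e^{isJ}\, i[J,X_{k,N}]\, e^{-isJ} = P_{k,N}(s).\]
Since $s \mapsto P_{k,N}(s)$ is norm-continuous, the fundamental theorem of calculus for Banach-space-valued functions yields the bounded-operator identity
\[X_{k,N}(t) = X_{k,N} + \int_0^t P_{k,N}(s)\,ds.\]

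Now I would fix $\psi \in D(X_k)$ and apply this identity to $\psi$. On the right, $X_{k,N}\psi \to X_k\psi$ in $\ell^2(\bbZ^d)$ by definition of $D(X_k)$, while $\int_0^t P_{k,N}(s)\psi\,ds \to \int_0^t P_k(s)\psi\,ds$ by dominated convergence, using the pointwise convergence $P_{k,N}(s)\psi \to P_k(s)\psi$ together with the uniform bound $\norm{P_{k,N}(s)\psi} \le \norm{P_k}\,\norm{\psi}$. Therefore the left-hand side $X_{k,N}(t)\psi = e^{itJ}X_{k,N}(e^{-itJ}\psi)$ converges in $\ell^2(\bbZ^d)$, and since $e^{itJ}$ is unitary, $X_{k,N}(e^{-itJ}\psi)$ converges as well. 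The point is that $\norm{X_{k,N}\phi}^2 = \sum_{\bm x}\abs{\tau_N(x_k)}^2\abs{\phi_{\bm x}}^2$ increases monotonically to $\sum_{\bm x}\abs{x_k}^2\abs{\phi_{\bm x}}^2$, so convergence of $X_{k,N}(e^{-itJ}\psi)$ forces this sum to be finite; that is, $e^{-itJ}\psi \in D(X_k)$, which is precisely the statement $\psi \in D(X_k(t))$, and passing to the limit gives \eqref{eq:xkintegraleq}.

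Finally, the domain equality follows from the group property. The argument above proves that $e^{-itJ}$ maps $D(X_k)$ into $D(X_k)$ for every $t \in \bbR$; applying it with $-t$ shows $e^{itJ}$ does as well, so $e^{itJ}D(X_k) = D(X_k)$ for all $t$. Combined with the description $D(X_k(t)) = \set{\psi : e^{-itJ}\psi \in D(X_k)}$, this gives $D(X_k(t)) = D(X_k)$. I expect the main subtlety to be exactly the domain step: one must resist differentiating $X_k(t)$ directly (the integrand lives outside $D(X_k)$ a priori) and instead extract membership in $D(X_k)$ from the monotone convergence of the truncated norms, which is what makes the uniform boundedness of the $P_{k,N}$ indispensable.
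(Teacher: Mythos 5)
Your proof is correct and follows essentially the same route as the paper: truncate $X_k$ to bounded operators $X_{k,N}$, verify the integral identity at the truncated level via norm-differentiability of $s \mapsto X_{k,N}(s)$, and pass to the limit using the uniform bound on $\|P_{k,N}\|$, strong convergence, and dominated convergence. Two small points in your favor: your symmetric truncation $\tau_N(x_k) \in [-N,N]$ is the right choice (the paper's literal definition assigns the value $N$ whenever $|x_k| > N$, creating a jump of size $2N$ between $x_k = -N$ and $x_k = -N-1$ that would destroy both the uniform bound and the strong convergence $P_{k,N} \to P_k$ --- evidently a typo), and your monotone-convergence argument extracting $e^{-itJ}\psi \in D(X_k)$ spells out the domain equality $D(X_k(t)) = D(X_k)$, which the paper leaves implicit by deferring to the cited reference.
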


The following representation of $P_k$ in the standard basis will be helpful.

\begin{prop} \label{prop:Pkcalc}
For $\bm{x} \in \bbZ^d$,
\[P_k \delta_{\bm{x}} =i a_{\bm{x}, \bm{x}-\bm{e}_k} \delta_{\bm{x}-\bm{e}_k} - i a_{\bm{x},\bm{x}+\bm{e}_k} \delta_{\bm{x}+\bm{e}_k}.\]
In particular, $\|P_k\| \leq 2\|a\|_\infty$.
\end{prop}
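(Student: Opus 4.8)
The plan is to compute $P_k = i[J, X_k]$ directly by evaluating its action on each basis vector $\delta_{\bm x}$ and reading off the matrix entries. Since $P_k = i(JX_k - X_k J)$, I will first compute $J X_k \delta_{\bm x}$ and $X_k J \delta_{\bm x}$ separately and then subtract.

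First I would apply $X_k$ to $\delta_{\bm x}$, giving $X_k \delta_{\bm x} = x_k \delta_{\bm x}$ by definition of the position operator, so that $J X_k \delta_{\bm x} = x_k J \delta_{\bm x}$. Next I would expand $J \delta_{\bm x}$ using the defining formula for $J$: the neighbors of $\bm x$ are exactly $\bm x \pm \bm e_j$ for $1 \le j \le d$, so
\[
J \delta_{\bm x} = \sum_{\bm y \sim \bm x} a_{\bm x, \bm y} \delta_{\bm y} + b_{\bm x} \delta_{\bm x}.
\]
Applying $X_k$ to this, each term $\delta_{\bm y}$ with $\bm y = \bm x + \bm e_j$ or $\bm y = \bm x - \bm e_j$ picks up the eigenvalue $y_k$, which equals $x_k$ unless $j = k$ (in which case it equals $x_k \pm 1$); the diagonal term $b_{\bm x}\delta_{\bm x}$ picks up $x_k$. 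Thus in the difference $J X_k \delta_{\bm x} - X_k J \delta_{\bm x}$, every term in which the $k$th coordinate is unchanged cancels, and only the two terms coming from $\bm y = \bm x \pm \bm e_k$ survive, contributing a factor of $x_k - (x_k \pm 1) = \mp 1$.

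Carrying this out, the surviving contributions are $a_{\bm x, \bm x + \bm e_k}\delta_{\bm x + \bm e_k}$ with coefficient $-1$ (from $x_k - (x_k+1)$) and $a_{\bm x, \bm x - \bm e_k}\delta_{\bm x - \bm e_k}$ with coefficient $+1$, so after multiplying by $i$ I obtain
\[
P_k \delta_{\bm x} = i a_{\bm x, \bm x - \bm e_k}\delta_{\bm x - \bm e_k} - i a_{\bm x, \bm x + \bm e_k}\delta_{\bm x + \bm e_k},
\]
as claimed. For the norm bound, I would note that $P_k$ has only two nonzero entries per column (and, by the symmetry $a_{\bm y, \bm x} = a_{\bm x, \bm y}^*$, per row), each of modulus at most $\|a\|_\infty$; a Schur-test estimate (both the maximal absolute row sum and column sum are bounded by $2\|a\|_\infty$) then gives $\|P_k\| \le 2\|a\|_\infty$.

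There is no serious obstacle here: the content is an elementary commutator computation, and the only point requiring a little care is the bookkeeping that ensures all terms with unchanged $k$th coordinate cancel, leaving precisely the two off-diagonal terms in the $\bm e_k$ direction. The boundedness of $P_k$ on all of $\ell^2(\bbZ^d)$ should also be remarked upon, since $X_k$ itself is unbounded; the point is that the commutator is bounded even though neither $JX_k$ nor $X_k J$ is, which is exactly what the explicit formula makes manifest.
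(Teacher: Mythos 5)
Your proposal is correct and is essentially identical to the paper's proof: both expand $J\delta_{\bm x}$, use $JX_k\delta_{\bm x}=x_kJ\delta_{\bm x}$, and subtract so that all terms with unchanged $k$th coordinate cancel, leaving exactly the two $\bm e_k$-direction terms. The paper leaves the norm bound as an immediate consequence of the formula (a sum of two weighted shifts, each of norm at most $\|a\|_\infty$), which your Schur-test remark establishes equally well.
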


\begin{proof}
For $\bm{x} \in \bbZ^d$, observe that
\begin{align*}
J\delta_{\bm{x}} =  \sum_{j=1}^d \left( a_{\bm{x},\bm{x}+\bm{e}_j}\delta_{\bm{x}+\bm{e}_j} + a_{\bm{x},\bm{x}-\bm{e}_j}\delta_{\bm{x}-\bm{e}_j} \right) + b_{\bm{x}} \delta_{\bm{x}},\end{align*}
so
\[X_k J\delta_{\bm{x}} =  \sum_{j=1}^d \left((x_k+\delta_{j,k}) a_{\bm{x}, \bm{x}+\bm{e}_j}\delta_{\bm{x}+\bm{e}_j} + (x_k-\delta_{j,k} )a_{\bm{x}, \bm{x}-\bm{e}_j}\delta_{\bm{x}-\bm{e}_j} \right) + x_k b_{\bm{x}} \delta_{\bm{x}}.\]
Subtracting this from $JX_k \delta_{\bm{x}} = x_k J\delta_{\bm{x}}$, we get
\[P_k \delta_{\bm{x}} = i a_{\bm{x}, \bm{x}-\bm{e}_k} \delta_{\bm{x}-\bm{e}_k} - i a_{\bm{x},\bm{x}+\bm{e}_k} \delta_{\bm{x}+\bm{e}_k},\]
as desired.
\end{proof}

\begin{proof}[Proof of Proposition~\ref{prop:xkintegral}]
This follows from the same argument as \cite[Theorem~2.1]{DamLukYes2015CMP}. 
Since $X_{k,N}$ is bounded, a direct calculation shows that \eqref{eq:xkintegraleq} holds with $X_k$ replaced by $X_{k,N}$, that is,
\begin{equation} \label{eq:xkintegraleqcutoff}   X_{k,N}(t) \psi = X_{k,N} \psi + \int_0^t P_{k,N}(s) \psi \, ds.\end{equation}
Since $X_{k,N} \to X_k$ and $P_{k,N} \to P_k$ strongly, \eqref{eq:xkintegraleq} follows immediately from \eqref{eq:xkintegraleqcutoff}, the uniform bound \[\|P_k\| \leq 2 \|a\|_\infty,\]
and dominated convergence.
\end{proof}

From the representation of $P_k$ in Proposition~\ref{prop:Pkcalc}, we can see that $P_k$ is $\bm{q}$-periodic whenever $J$ is $\bm{q}$-periodic, so it too defines operators $P_k(\boldtheta) := P_k|_{\scrH(\boldtheta)}$ for each $\boldtheta\in \bbT^d$.

\begin{theorem}
We have $\scrF P_k \scrF^* = \widehat{P}_k$, where $P_k = \int_{\bbT^d}^\oplus P_k(\boldtheta) \, d\boldtheta$, that is,
\[[\widehat{P}_k g] (\boldtheta) = P_k(\boldtheta) g(\boldtheta).\]
\end{theorem}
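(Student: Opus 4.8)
The plan is to show that $\scrF P_k \scrF^* = \widehat{P}_k$ by verifying the identity on the orthonormal basis $\{\varphi_{\bm{x},\bm{n}} : \bm{x} \in \Gamma, \, \bm{n} \in \bbZ^d\}$ of $\scrH_1$, exactly as was done for $J$ in the proof of Theorem~\ref{t:floquet1}. Since $P_k = i[J,X_k]$ is bounded (Proposition~\ref{prop:Pkcalc} gives $\|P_k\| \le 2\|a\|_\infty$) and $\bm{q}$-periodic, it maps each fiber $\scrH(\boldtheta)$ into itself, so $P_k(\boldtheta)$ is well defined and the right-hand side $\widehat{P}_k$ makes sense as a bounded decomposable operator. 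The strategy is therefore to compute both $P_k(\boldtheta)\varphi_{\bm{x},\bm{n}}(\boldtheta)$ and $[\scrF P_k \scrF^* \varphi_{\bm{x},\bm{n}}](\boldtheta)$ directly and observe they agree.

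For the first computation I would use that $\varphi_{\bm{x},\bm{n}}(\boldtheta) \in \scrH(\boldtheta)$, so $P_k(\boldtheta)\varphi_{\bm{x},\bm{n}}(\boldtheta) = P_k\varphi_{\bm{x},\bm{n}}(\boldtheta)$, and then expand using \eqref{eq:varphinx} together with the explicit action of $P_k$ on basis vectors from Proposition~\ref{prop:Pkcalc}:
\[
P_k \delta_{\bm{x}+\bm{mq}} = i a_{\bm{x}+\bm{mq},\,\bm{x}+\bm{mq}-\bm{e}_k}\,\delta_{\bm{x}+\bm{mq}-\bm{e}_k} - i a_{\bm{x}+\bm{mq},\,\bm{x}+\bm{mq}+\bm{e}_k}\,\delta_{\bm{x}+\bm{mq}+\bm{e}_k}.
\]
Invoking $\bm{q}$-periodicity of $a$ to replace the coefficients by $a_{\bm{x},\bm{x}-\bm{e}_k}$ and $a_{\bm{x},\bm{x}+\bm{e}_k}$ and then recombining the sum over $\bm{m}$ into the basis elements, I expect the result to collapse to
\[
P_k(\boldtheta)\varphi_{\bm{x},\bm{n}}(\boldtheta) = i a_{\bm{x},\bm{x}-\bm{e}_k}\,\varphi_{\bm{x}-\bm{e}_k,\bm{n}}(\boldtheta) - i a_{\bm{x},\bm{x}+\bm{e}_k}\,\varphi_{\bm{x}+\bm{e}_k,\bm{n}}(\boldtheta).
\]
For the second computation I would apply $\scrF^*$ to get $\delta_{\bm{x}+\bm{nq}}$, act by $P_k$ using Proposition~\ref{prop:Pkcalc} (again with periodicity of $a$), and reapply $\scrF$, using that $\scrF\delta_{\bm{y}+\bm{nq}} = \varphi_{\bm{y},\bm{n}}$; this yields precisely the same linear combination of $\varphi_{\bm{x}\pm\bm{e}_k,\bm{n}}$.

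The only genuine subtlety is bookkeeping when $\bm{x} \pm \bm{e}_k$ leaves the fundamental domain $\Gamma$. In that case one must reduce the index back into $\Gamma$ modulo $\bm{q}$ and absorb the resulting shift $\bm{nq}$-correction into the phase $e^{2\pi i \langle \boldtheta, \bm{n} \rangle}$; this is the same mechanism that makes $\scrH(\boldtheta)$ invariant under $J$, and it is exactly why both sides transform identically, so no genuine obstacle arises. Since the two computations produce the same vector for every basis element $\varphi_{\bm{x},\bm{n}}$, and since both $\scrF P_k \scrF^*$ and $\widehat{P}_k$ are bounded operators (the latter because $\esssup_{\boldtheta} \|P_k(\boldtheta)\| \le \|P_k\| \le 2\|a\|_\infty$), the two operators agree on a dense set and hence everywhere, proving the claim.
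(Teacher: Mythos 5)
Your proposal is correct and is exactly the paper's approach: the paper's proof simply states that the claim follows by ``essentially the same calculation as in the proof of Theorem~\ref{t:floquet1},'' and you have carried out precisely that calculation, checking the identity on the basis $\varphi_{\bm{x},\bm{n}}$ using Proposition~\ref{prop:Pkcalc} and the $\bm{q}$-periodicity of $a$. Your handling of the re-indexing when $\bm{x}\pm\bm{e}_k$ leaves $\Gamma$ is also the right bookkeeping, and is the same mechanism implicitly used in the proof of Theorem~\ref{t:floquet1}.
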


\begin{proof}
This is essentially the same calculation as in the proof of Theorem~\ref{t:floquet1}.
\end{proof}

We now have all the necessary pieces in place in order to prove the main result.

\begin{proof}[Proof of Theorem~\ref{t:ball}] For $\psi \in \bbC^\Gamma$, $\bm{x} \in \Gamma$, and $\boldtheta \in \bbT^d$, define \[\bm{x}/\bm{q} =(x_1/q_1,\ldots,x_d/q_d)\] and define the multiplication operator $M:\scrH_1 \to \scrH_1$ by $[Mg](\boldtheta) = M(\boldtheta)g(\boldtheta)$, where
\[[M(\boldtheta)\psi]_{\bm x} = e^{2\pi i \langle \boldtheta, \bm{x}/ \bm{q}\rangle} \psi_{\bm{x}}\]
Write $\widetilde{J}(\boldtheta) = M(\boldtheta)^{-1}J(\boldtheta) M(\boldtheta)$ and likewise for $\widetilde P_k(\boldtheta)$. A direct calculation shows
\[\widetilde P_k(\boldtheta)
=  \frac{q_k}{2\pi} \frac{\partial}{\partial \theta_k} \widetilde J(\boldtheta),\]
and thus, denoting the projection onto the eigenspace of $E_j(\boldtheta)$ by $\Pi_j(\boldtheta)$, we have
\[\Pi_j(\boldtheta)P_k(\boldtheta)\Pi_j(\boldtheta) = \frac{q_k}{2\pi}\frac{\partial E_j}{\partial \theta_k}(\boldtheta) \Pi_j(\boldtheta)\]
for a.e.\ $\boldtheta$ (by Lemma~\ref{lem:eigenvalues}).

Thus, we have 
\begin{align*}
\frac{X_k(t)}{t} & = \frac{X_k}{t} + \frac{1}{t} \int_0^t P_k(s) \, ds \\
& = \frac{X_k}{t} + \frac{1}{t} \int_0^t \int^\oplus_{\bbT^d} e^{isJ(\boldtheta)} P_k(\boldtheta) e^{-isJ(\boldtheta)} \, d\boldtheta \, ds \\
 &\to \frac{q_k}{2\pi} \int^\oplus_{\bbT^d}  \sum_{j=1}^{\bar q} \frac{\partial E_j}{\partial \theta_k}(\boldtheta) \Pi_j(\boldtheta) \, d\boldtheta,
\end{align*}
where we applied dominated convergence to deduce the final line. By Lemma~\ref{lem:eigenvalues}, $\partial E_j/\partial\theta_k\neq 0$ a.e., so $\ker(Q_k) =\{0\}$.
\end{proof}

\bibliographystyle{amsplain}
\bibliography{BZbib}

\end{document}